 \DeclareFontFamily{U}{mathc}{}
\DeclareFontShape{U}{mathc}{m}{it}%
{<->s*[1.03] mathc10}{}
\DeclareMathAlphabet{\mathscr}{U}{mathc}{m}{it}
\newcommand{\e}{\varepsilon}
\newcommand{\set}[1]{\left\{#1\right\}}
\newcommand{\norm}[1]{{\left\Vert#1\right\Vert}}
\newcommand{\rest}[1]{ \arrowvert_{#1}}
\newcommand{\lrpar}[1]{\left(#1\right)}
\newcommand{\inv}{^{-1}}
\newcommand{\C}{\mathbf{C}}
\newcommand{\R}{\mathbf{R}}
\newcommand{\Q}{\mathbf{Q}}
\newcommand{\Z}{\mathbf{Z}}
\newcommand{\bfK}{{\mathbf{K}}}
\renewcommand{\P}{\mathbb{P}}
\newcommand{\A}{\mathbb{A}}
\newcommand{\Aut}{\mathsf{Aut}}
 \newcommand{\Bir}{{\mathsf{Bir}}}
\newcommand{\GL}{{\sf{GL}}}
\theoremstyle{plain}
\newtheorem*{lem}{Lemma}
 \newtheorem*{thmA}{Theorem A}
 \newtheorem*{thmB}{Theorem B}
\theoremstyle{definition}
\newtheorem*{eg}{Example}
\numberwithin{equation}{section}       
\numberwithin{equation}{section}       
\title[Conjugate polynomial automorphisms of $\mathbb{C}^2$]{Holomorphically conjugate polynomial automorphisms of $\mathbb{C}^2$ are polynomially conjugate}
\date{\today}
\author{Serge Cantat}
\address{Serge Cantat, IRMAR, Campus de Beaulieu,
b\^atiments 22-23
263 avenue du G\'en\'eral Leclerc, CS 74205
35042  RENNES C\'edex, France}
\email{serge.cantat@univ-rennes1.fr}
 \author{Romain Dujardin}
\address{Romain Dujardin,  Sorbonne Universit\'e, CNRS, Laboratoire de Probabilit\'es, Statistique  et Mod\'elisation  (LPSM), F-75005 Paris, France}
\email{romain.dujardin@sorbonne-universite.fr}
\thanks{
{\small{The research activities of the authors  are partially funded by the European Research Council (ERC GOAT 101053021). The authors benefited from the support of the French government "Investissements d'Avenir" program integrated to France 2030 (ANR-11-LABX-0020-01).}}
}
\begin{document}

\setlength{\parskip}{.2em}
\setlength{\baselineskip}{1.26em}   

\begin{abstract}
We confirm a conjecture of Friedland and Milnor: if two polynomial automorphisms $f$ and $g\in \Aut(\A^2_\C)$ with dynamical degree $>1$ 
are  conjugate by some holomorphic diffeomorphism $\varphi\colon \C^2\to \C^2$, then $\varphi$ is a polynomial automorphism; thus, $f$ and $g$ are conjugate inside $\Aut(\A^2_\C)$. We also discuss a number of variations on this result. 
\end{abstract}

\maketitle

\section{Introduction}

\subsection{Conjugacy classes in $\Aut(\A^2_\bfK)$} In their very influential paper~\cite{friedland-milnor}, Friedland and Milnor study the dynamical and group-theoretic properties of polynomial automorphisms of 
the affine plane defined over the field $\bfK=\R$ or $\C$. In particular, they provide a dichotomy for conjugacy classes in $\Aut(\A^2_\C)$, which we describe below.

For any field $\bfK$, Jung's theorem says that the group $\Aut(\A^2_\bfK)$ is the amalgamated product of the group of {\emph{affine automorphisms}} $\GL_2(\bfK)\ltimes \bfK^2$ and the group of \emph{elementary automorphisms}
$$(x,y)\mapsto (\alpha x+ p(y) , \beta y +\gamma),$$
with amalgamation along their intersection. 
If $f$ is an element of $\Aut(\A^2_\bfK)$, we denote
 by $\deg(f)$ the degree of the formulas defining $f$, and   set 
 $$ \lambda_1(f)=\lim_{n\to +\infty} \deg(f^n)^{1/n}.$$
Friedland and Milnor prove the following:
 \begin{itemize}
\item either $\lambda_1(f)=1$, 
and then $f$ is conjugate to an elementary or an affine automorphism; moreover, if $\bfK$ is algebraically closed, every affine automorphism is conjugate to an elementary automorphism by a linear change of variable; 
\item or $\lambda_1(f)$ is an integer larger than $1$, 
$f$ is conjugate to a \emph{generalized Hénon map}, that is a finite composition of Hénon mappings  $(x,y)\mapsto (ay+p_i(x), x)$ with $\deg(p_i)\geq 2$. In that case, $\lambda_1(f)$ is the product of the degrees of the $p_i$.
\end{itemize} 

As explained in~\cite{friedland-milnor}, it follows that 
the dynamics of $f$ is interesting only when $f$ 
is conjugate to a generalized Hénon map; for instance, for $f\in \Aut(\A^2_\C)$, the topological entropy of the map $f\colon  \C^2\to \C^2$ is equal to $\log(\lambda_1(f))$ (see~\cite{smillie:entropy_henon}).

The Bass-Serre theory shows that $\Aut(\A^2_\bfK)$ acts on a simplicial tree, with stabilizers of vertices corresponding to conjugates of the affine group and the elementary group. Then, the classification of Friedland and Milnor corresponds to the following dichotomy: $\lambda_1(f)=1$ if and only if 
the induced action of $f$ on the Bass-Serre tree fixes some vertex, 
while $\lambda(f)>1$ when $f$  acts 
as a non-trivial translation on some invariant geodesic axis. In accordance with this description, we will refer to these two cases as \emph{elliptic} and \emph{loxodromic}, respectively. 
   
\subsection{Conjugation under biholomorphisms and rigidity} Let us now suppose that $\bfK=\C$. Friedland and Milnor  also study the following question: {\emph{if two polynomial automorphisms are conjugate by some holomorphic diffeomorphism of $\C^2$, are they also conjugate by an element of $\Aut(\A^2_\C)$}}? Automorphisms with distinct dynamical degrees cannot be conjugate by a homeomorphism $\C^2 \to \C^2$, because their topological entropies are distinct.
In particular, elliptic and loxodromic classes should be treated separately. 
  A complete answer is given in~\cite{friedland-milnor} for elementary automorphisms: the answer is {\emph{yes}} if such an automorphism admits a periodic point, and  {\emph{no}}  otherwise~\cite[Theorem 6.10 and Lemma 6.12]{friedland-milnor}. For generalized Hénon maps, Friedland and Milnor also prove that the answer is yes for maps of degree 2 and 3, and conjecture that the same result holds in the general case~\cite[Theorem 7.1]{friedland-milnor}. In this note we confirm this conjecture. 
  
\begin{thmA}
Let $f$ and $g$ be loxodromic polynomial automorphisms of the affine plane, defined over some subfield 
$\bfK$ of $\C$. 
If $\varphi\colon \C^2\to \C^2$ is a biholomorphism that conjugates $f$ to $g$, i.e.\ $\varphi\circ f\circ \varphi{\inv}=g$, then 
 $\varphi$ is a polynomial automorphism, and it is   defined over a finite extension of $\bfK$. 
 Moreover, one can find  $\psi\in \Aut(\A^2_{\overline{\bfK}})$ such that $\psi\circ f=g\circ \psi$ and  $\deg(\psi)\leq 2^{57}(\deg(f)\deg(g))^{29}$.
\end{thmA}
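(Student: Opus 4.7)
The strategy is to exploit the dynamics of $f, g$ at infinity — Green functions and Böttcher-type coordinates — to force $\varphi$ to have polynomial growth, extend it meromorphically to $\P^2$, and then quantify the whole argument to extract a representative of bounded degree in the coset of conjugating maps.

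\emph{Dynamical transport.} Because $\varphi$ is a topological conjugacy, the topological entropies, hence the dynamical degrees, of $f$ and $g$ coincide: $d := \la_1(f) = \la_1(g) \geq 2$. Introduce the continuous plurisubharmonic Green functions
\[
G^{\pm}_h(z) \;=\; \lim_{n\to\infty} \frac{1}{d^n}\log^+\|h^{\pm n}(z)\|, \qquad h \in \{f,g\},
\]
uniquely characterized by the functional equations $G^{\pm}_h \circ h^{\pm 1} = d\,G^{\pm}_h$ together with logarithmic growth at infinity. The relation $\varphi\circ f = g\circ\varphi$ then forces $G^{\pm}_g \circ \varphi = G^{\pm}_f$; in particular $\varphi$ identifies the escape loci $V^{\pm}_h = \{G^{\pm}_h > 0\}$, the forward/backward Julia sets $J^{\pm}_h = \partial V^{\pm}_h$, and the compact filled Julia set $K_h = \C^2 \setminus (V^+_h \cup V^-_h)$ with their counterparts for $g$.

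\emph{Böttcher rigidity.} After conjugating $f$ and $g$ into a H\'enon normal form within $\Aut(\A^2_\bfK)$ (at bounded degree cost), each admits holomorphic Böttcher coordinates $\phi^{\pm}_h \colon \Omega^{\pm}_h \to \C$ on a neighborhood $\Omega^{\pm}_h \subset V^{\pm}_h$ of the super-attracting indeterminacy point of $h^{\pm 1}$ on the line at infinity, with $|\phi^{\pm}_h| = e^{G^{\pm}_h}$, $\phi^{\pm}_h \circ h^{\pm 1} = (\phi^{\pm}_h)^d$, and $\phi^+_h(x,y) = x + O(1)$, $\phi^-_h(x,y) = y + O(1)$. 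The composition $\phi^{\pm}_g \circ \varphi$ satisfies the same functional equation as $\phi^{\pm}_f$ with equal modulus, so their ratio is a holomorphic non-vanishing function of constant modulus $1$ satisfying $\rho\circ f = \rho^d$; hence $\phi^{\pm}_g \circ \varphi = \zeta_{\pm}\phi^{\pm}_f$ for some $(d-1)$-th roots of unity $\zeta_{\pm}$. Comparing asymptotics, $\varphi_1(z) = \zeta_+ z_1 + O(1)$ on $\Omega^+_f$ and $\varphi_2(z) = \zeta_- z_2 + O(1)$ on $\Omega^-_f$: $\varphi$ grows linearly along the two arms at infinity.

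\emph{Extension to $\P^2$ and descent.} Combined with the boundedness of $\varphi$ on $K_f$ and propagation via the dynamics ($f$-orbits sweep $V^{\pm}_f$ from $\Omega^{\pm}_f$), each coordinate of $\varphi$ is polynomially bounded in a neighborhood of every point of the line at infinity, except possibly at finitely many indeterminacy points of $f^{\pm 1}$. By Riemann and Levi extension theorems, $\varphi$ extends meromorphically across the generic line at infinity, and Hartogs handles the remaining finite set. Thus $\varphi$ extends to a meromorphic self-map of $\P^2$ restricting to a biholomorphism of $\C^2$, which forces $\varphi \in \Aut(\A^2_\C)$. Rationality over a finite extension $\bfK'/\bfK$ follows from a Galois argument: for $\sigma \in \mathrm{Gal}(\C/\bfK)$, $\sigma(\varphi)$ still conjugates $f$ to $g$, so $\sigma(\varphi) \circ \varphi^{-1}$ lies in the centralizer $Z(g)$, known to be virtually cyclic for loxodromic $g$; this forces the Galois orbit of $\varphi$ to be finite.

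\emph{Effective degree bound and main obstacle.} The set of conjugating polynomial automorphisms is the coset $\varphi \cdot Z(f)$; the explicit $\psi$ of the theorem is an optimally-chosen representative. The bound $\deg(\psi) \leq 2^{57}(\deg f \deg g)^{29}$ demands an effective version of the argument above: quantifying the $O(1)$ errors in the Böttcher expansions in terms of $\deg f$ and $\deg g$, estimating the degree of the meromorphic extension via B\'ezout applied to $\varphi$-images of well-chosen algebraic curves, and absorbing the cost of the initial normalization. The main technical obstacle is precisely the extension step: the two arms $\Omega^+_f$ and $\Omega^-_f$ are disjoint, each constrains only one coordinate of $\varphi$, and coupling them into a single meromorphic extension across the line at infinity — especially through the indeterminacy points — is the heart of the proof and the source of the explicit exponent $29$.
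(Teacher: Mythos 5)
The central gap is your assertion that $G^{\pm}_g\circ\varphi = G^{\pm}_f$. You justify it by saying the Green function is ``uniquely characterized by the functional equation together with logarithmic growth at infinity,'' but $G^{+}_g\circ\varphi$ is not known to have logarithmic growth: that is essentially equivalent to the polynomial growth of $\varphi$ that you are trying to establish, so the argument is circular. A priori you only know that $G^{+}_g\circ\varphi$ is a continuous, nonnegative, plurisubharmonic function vanishing exactly on $K^{+}_f$ and satisfying the functional equation; to identify it with a multiple of $G^{+}_f$ requires a genuine rigidity input. The paper supplies it in two steps: the Forn\ae ss--Sibony theorem (uniqueness, up to scaling, of closed positive currents supported on $K^{+}_f$) gives $dd^c(G^{+}_g\circ\varphi)=c\,dd^cG^{+}_f$, and then the pluriharmonic difference $H=cG^{+}_f-G^{+}_g\circ\varphi$ is shown to vanish because $\{H=0\}$ would carry a singular holomorphic foliation whose leaves would have to contain the stable manifolds of saddle points, which are dense in $\partial K^{+}_f$ and nowhere embedded. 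Note that even after this, one only gets $G^{+}_g\circ\varphi=cG^{+}_f$ for some $c>0$, not $c=1$; the constant is harmless for the growth estimate, but your B\"ottcher identity $\phi^{+}_g\circ\varphi=\zeta_+\phi^{+}_f$ and the root-of-unity conclusion are then false as stated.

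Once the Green functions are transported, the paper bypasses what you call the ``main technical obstacle'' entirely: from $G_g\circ\varphi\le\max(c,c')\,G_f$ and $G_h=\log^+\Vert\cdot\Vert+O(1)$ one reads off $\Vert\varphi(p)\Vert\le C\Vert p\Vert^{e}$ directly, hence $\varphi$ (and likewise $\varphi^{-1}$) is polynomial --- no B\"ottcher coordinates, no meromorphic extension to $\P^2$, no analysis at indeterminacy points. Your route constrains only one coordinate of $\varphi$ on each of two disjoint arms and you concede that coupling them is open; that step is simply not needed. Moreover, the exponent $29$ does not come from quantifying B\"ottcher error terms: the paper gets the degree bound from Blanc--Cantat's effective theorem producing a \emph{birational} conjugacy of controlled degree, together with a short lemma that a birational map conjugating two H\'enon maps is automatically an automorphism (no contracted curves, since H\'enon maps preserve no algebraic curve, and no indeterminacy points in $\A^2$ by a minimal-resolution argument). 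Finally, your Galois argument is incomplete as stated: $Z(g)$ contains $g^{\Z}$ with finite index, hence is infinite, so ``virtually cyclic'' alone does not make the Galois orbit finite; you must add that Galois conjugation preserves $\deg\varphi$ and that only finitely many elements of the coset $\varphi\cdot Z(f)$ have degree at most $\deg\varphi$.
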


A natural approach to this problem is to use eigenvalues at periodic orbits and to show that loxodromic automorphisms with the same eigenvalues are conjugate  in $\Aut(\A^2_\C)$.
This is precisely what is done in~\cite{friedland-milnor} in degree $\leq 3$. 
However, for arbitrary degree, such a
multiplier rigidity result  is  not available, so we use a different method. 
In a nutshell, the main ingredient will be the existence and rigidity of 
the canonical  invariant currents of $f$ and $g$; these currents $T^\pm_f$ and $T^\pm_g$ must be 
respected by the conjugacy, i.e.\ $\varphi_\varstar T^\pm_f   = c T^\pm_g$ for some $c>0$, and their rigidity properties will be the key to our proof.

 
When $f=g$, the corresponding result was proven in \cite[Proposition 8.1]{Cantat:Annals} and~\cite{blanc-cantat}; we shall use this particular case below. Related results were also obtained by Bera, Pal and Verma in~\cite{bera-pal-verma}, 
with an approach similar to ours.

\subsection*{Acknowledgement} We thank Christophe Dupont and Sébastien Gouëzel for interesting discussions related to this paper

\section{Proof of Theorem A} 
\subsection{Preliminaries on Hénon maps (see~\cite{friedland-milnor} and~\cite{Sibony:Panorama})} 
Let $f\in \Aut(\A^2_\C)$ be a  generalized Hénon map of degree $d$ (so $d=\lambda_1(f)$). Its birational extension to the projective plane $\P^2(\C)$ 
has an indeterminacy point at $I^+_f:=[0:1:0]$, and it contracts the line at infinity $\set{z=0}$ onto the indeterminacy point $I^-_f:=[1:0:0]$ of $f{\inv}$. 
Let $K^+_f$ (resp.\ $K^-_f$) be the subset of $\C^2$ of points with a bounded forward (resp.\ backward) orbit; its closure in $\P^2(\C)$ intersects the line at infinity at $I^+_f$ (resp.\ at $I^-_f$). Let $\norm{\cdot}$ denote the usual euclidean norm on $\C^2$. Then, the function $G^+_f\colon \C^2\to \R$ defined by
$$ G^+_f(p)=\lim_{n\to +\infty} \frac{1}{d^n}\log^+\norm{f^n(p)}$$
is continuous, plurisubharmonic, and non-negative; it vanishes exactly along $K^+_f$ and  satisfies $ G^+_f\circ f=d G^+_f$. The difference $p\mapsto G^+_f(p)-\log^+\norm{p}$   
extends to a continuous function on $\P^2(\C)\setminus \big\{I^+_f\big\}$, as follows for instance from \cite[Thm 1.7.1]{Sibony:Panorama}.  
 The closed, positive current 
$$
T^+_f=\frac{\sf{i}}{\pi}\partial\overline{\partial}G^+_f
$$
satisfies $f^\varstar T^+_f=dT^+_f$; it is supported on the boundary of $K^+_f$ and up to a positive multiplicative constant, {\emph{it is the unique closed positive current supported by $K^+_f$}} (see~\cite[Theorem 7.12]{fornaess-sibony:1993}). Changing $f$ into $f^{\inv}$, one constructs similar objects  $K^-_f$, $G^-_f$ and $T^-_f$. 

The intersection $K_f=K^+_f\cap K^-_f$ is a compact $f$-invariant subset of $\C^2$, it is maximal for this property, and the topological entropy of $f$ on $K_f$ is equal to $\log(d)$. The automorphism $f$ has infinitely many saddle periodic points, all of them contained in $K_f$ (see~\cite{bls}).
If $q\in \C^2$ is such a saddle periodic point, its stable manifold $W^s(q)$ is the image of a holomorphic, injective, immersion $\C\to \C^2$, {\emph{the image of which is dense in $\partial K^+_f$}}; the current $T^+_f$ can be recovered asymptotically as a current of integration on $W^s(p)$ (see~\cite{bs2}) 
The function $G_f:= \max(G^+_f, G^-_f)$ is continuous and plurisubharmonic and vanishes exactly on $K_f$. Moreover
$$G^+_f(p)\leq \log^+\norm{p}  + O(1)\;  \text{ and } \;  G_f(p) = \log^+\norm{p} + O(1)$$ 
as $\norm{p}$ goes to $+\infty$; see Corollary~{2.6} and Proposition~{3.8} in~\cite{bs1}. 
The product $T^+_f\wedge T^-_f$ is an invariant probability measure, supported in $K_f$; it coincides with 
$(\frac{\sf{i}}{\pi}\partial {\overline{\partial}} G_f)^2$.

\subsection{The proof} 
Conjugating $f$ and $g$ by elements of $\Aut(\A^2_\C)$, we may assume that both of them are generalized Hénon maps. Let
$\varphi\colon \C^2\to \C^2$ be a biholomorphism such that $\varphi \circ f\circ  \varphi\inv = g$. Then $\varphi \circ f^n = g^n\circ  \varphi$ for all $n\in \Z$, $\varphi(K^\pm_f)=K^\pm_g$,
$\varphi(K_f)=K_g$, and
$f$ and $g$ have the same topological entropy, hence the same degree $d$. 

\begin{lem}
There exists a positive real number $c$ such that $\varphi^\varstar T_g^+ = c T_f^+$ and 
$  G_g^+\circ \varphi = c G_f^+$. 
\end{lem}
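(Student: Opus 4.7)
Plan. The plan is to first obtain the equality of currents from the Fornaess--Sibony uniqueness theorem cited above, and then to upgrade it to an equality of potentials via an entire-function rigidity argument.

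Since $\varphi\circ f^n=g^n\circ\varphi$ for all $n$, one has $\varphi(K_f^+)=K_g^+$ and hence $\varphi(\partial K_f^+)=\partial K_g^+$. Because $\varphi$ is a biholomorphism, $S:=\varphi^\varstar T_g^+$ is a closed positive $(1,1)$-current on $\C^2$, supported on $\partial K_f^+\subset K_f^+$, and nonzero. The uniqueness statement recalled from \cite{fornaess-sibony:1993} then yields $S=cT_f^+$ for some constant $c>0$.

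Passing to $dd^c$-potentials, the function $u:=G_g^+\circ\varphi-cG_f^+$ is pluriharmonic on $\C^2$. Combining $G_f^+\circ f=dG_f^+$, $G_g^+\circ g=dG_g^+$ and $\varphi\circ f=g\circ\varphi$ gives $u\circ f=du$; and $u$ vanishes on $K_f^+$ since both $G_f^+$ and $G_g^+\circ\varphi$ do. Since $\C^2$ is simply connected, one can write $u=\Re F$ for some entire holomorphic $F\colon\C^2\to\C$. The function $F\circ f-dF$ then has zero real part, hence is a purely imaginary constant, which can be absorbed by translating $F$ by an imaginary constant; one may therefore assume $F\circ f=dF$. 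For any $p\in K_f^+$ the orbit $(f^n(p))_{n\geq 0}$ is bounded, whence continuity forces $d^n|F(p)|=|F(f^n p)|$ to stay bounded and so $F(p)=0$. Thus $F\equiv 0$ on $K_f^+$.

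The remaining and main difficulty is to deduce $F\equiv 0$ on all of $\C^2$. For this I would use that $G_f^+$ is continuous, so $T_f^+$ has identically vanishing Lelong numbers and in particular does not charge any analytic subvariety. If $F$ were not identically zero, the analytic hypersurface $\{F=0\}$ of $\C^2$ would contain $\supp T_f^+=\partial K_f^+$; by the support theorem for closed positive $(1,1)$-currents, $T_f^+$ would then decompose as a locally finite sum $\sum c_i[Z_i]$ of integration currents on irreducible analytic curves $Z_i$, and the vanishing of Lelong numbers would force all $c_i=0$, contradicting $T_f^+\neq 0$. Hence $F\equiv 0$, so $u\equiv 0$ and $G_g^+\circ\varphi=cG_f^+$ as required.
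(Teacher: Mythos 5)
Your proposal is correct. The first half (deducing $\varphi^\varstar T_g^+=cT_f^+$ from the Fornaess--Sibony uniqueness of positive closed currents supported on $K^+$) is exactly the paper's argument. For the equality of potentials you take a genuinely different route. The paper sets $H=cG_f^+-G_g^+\circ\varphi=\Re(h)$ and argues that the level sets $\set{h=i\alpha}$ would foliate $\set{H=0}$ by analytic curves, forcing each stable manifold $W^s(q)$ of a saddle point (which lies in $K_f^+$) into a single leaf; this contradicts the density of $W^s(q)$ in $\partial K_f^+$, so the argument leans on the Bedford--Lyubich--Smillie theory of saddle points and their stable manifolds. You instead exploit the equivariance $u\circ f=du$ to normalize the entire primitive so that $F\circ f=dF$ exactly (the normalization $\gamma=\beta/(d-1)$ works since $d\geq 2$), deduce that $F$ itself --- not just its real part --- vanishes on $K_f^+$ from boundedness of forward orbits, and then rule out $F\not\equiv 0$ by the second support theorem for closed positive $(1,1)$-currents together with the vanishing of the Lelong numbers of $T_f^+$ (continuity of $G_f^+$). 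Both arguments are sound; yours replaces the dynamical input (existence and density of stable manifolds) by standard pluripotential theory plus a stronger use of the conjugacy relation, which makes it somewhat more self-contained relative to the facts already recalled in the preliminaries. Two small remarks: you only need the inclusion $\supp T_f^+\subset K_f^+$ (not the equality with $\partial K_f^+$) to run the support-theorem step, and the nonvanishing $T_f^+\neq 0$ used for the final contradiction is immediate from $\int T_f^+\wedge T_f^-=1$.
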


\begin{proof}
The first statement follows from the fact, recalled above,  that the only positive closed $(1,1)$-currents supported on $K_g^+$ are of the form $cT_g^+$, $c>0$. 

To prove the second fact, we observe that $H = cG_f^+  - G_g^+\circ \varphi$ is a pluriharmonic function 
on $\C^2$
with $K_f^+\subset \set{H=0}$. Assume that $H\not\equiv 0$ and fix a    holomorphic function $h$ 
on $\C^2$ such that $H$ is the real part  $\Re(h)$. 
Then $\set{H=0}$ admits a unique singular foliation by Riemann surfaces: its leaves are the level sets $\set{h  = i \alpha}$, $\alpha\in \R$. On the other hand, for any saddle periodic point $q$ of $f$, the stable manifold $W^s(q)\subset \C^2$ is a connected, 
 immersed, Riemann surface contained 
in $K_f^+$; thus $W^s(q)$ is an irreducible component of $\set{h  = i \alpha}$  for some $\alpha$. This is a contradiction since $W^s(q)$ is dense in the boundary of $K^+_f$, so it is not embedded at any point. 
\end{proof}

Similarly, there exists a constant $c'>0$ such that $G^-_g\circ \varphi = c' G_f^-$. In fact, from 
$$\int \varphi^\varstar (T_g^+\wedge T_g^-) =  \int T_f^+\wedge T_f^- =1$$ we deduce  that  $c' = 1/c$, but this will not be used in what follows. 
From this we infer that 
$$0\leq G_g \circ \varphi  = \max (G_g^+\circ \varphi, G_g^- \circ \varphi) = \max \lrpar{c G_f^+ , c' G_f^-} \leq 
\max(c, c') G_f.
$$  
On the other hand, $G_f (p)  = \log\norm{p}+ O(1)$, and  $G_g(p)= \log\norm{p}+ O(1)$ as $\norm{p}\to \infty$. Since  $\varphi$ is proper, we deduce that
$$\log\norm{\varphi(p)} + O(1)  = G _g(\varphi(p))\leq \max(c, c')  G_f(p) \leq \max(c, c') \log \norm{p} +O(1), $$
that is $\norm{\varphi(p)}\leq C \norm{p}^{e}$ for some constant $C>0$ and for $e=\max(c, c') $. This implies that $\varphi$ is defined by polynomial formulas. Similarly, $\varphi\inv$  is also defined by polynomial formulas, and $\varphi$ is a polynomial automorphism. 

If $\varphi$ and $\psi$ are two automorphisms conjugating $f$ to $g$, then $\varphi\circ\psi\inv$ commutes with $f$. 
In~\cite{Lamy:Tits, blanc-cantat}, it is proven that the centralizer of $f$ in $\Aut(\A^2_\C)$ contains $f^\Z$ as a finite index subgroup.
Thus, given any integer $D\geq 1$, the set $\set{\psi\in \Aut(\A^2_\C) \, ; \; \psi\circ f\circ \psi\inv=g, \; \deg(\psi)\leq D}$ is finite, and it is non-empty for $D\geq \deg(\varphi)$.  
Write  $\psi=(\psi_1, \psi_2)$, with $\psi_1$ and $\psi_2$ in $\C[x,y]$ of degree $\leq D$.  
The constraints  $ \psi\circ f\circ \psi\inv=g$  and $\psi\in \Aut(\A^2_\C)$ correspond to polynomial equations in the coefficients of $\psi_1$ and $\psi_2$; the coefficients of these equations are in the field of definition $\bfK$ of $f$ and $g$. Thus, the solutions lie in a finite extension of $\bfK$.
 
It remains to show that one can find an automorphism $\psi$ that conjugates $f$ to $g$ and has degree at most $2^{57}(\deg(f)\deg(g))^{29}$. From~\cite[Theorem 4.10]{blanc-cantat}, one can find such a conjugacy in the group $\Bir(\P^2_{\overline{\bfK}})$. 
Thus, to conclude one only needs to apply the following lemma of independent interest.
\qed

\begin{lem}
Let $f$ and $g\in \Aut(\A^2_\C)$ be two Hénon automorphisms.
If $\psi$ is a birational transformation of the affine plane that $f$ to $g$, then $\psi$ is an automorphism. 
\end{lem}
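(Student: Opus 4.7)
The plan is to mimic, on the Zariski open set $U\subset\C^2$ where $\psi$ is biregular onto its image, the pluriharmonic-function argument from the proof of Theorem~A, and then to exploit the fact that $G_g=\max(G^+_g,G^-_g)$ diverges at $L_\infty$ while $\max(G^+_f,G^-_f)$ stays finite on $\C^2$.

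First I would transfer the lemma used in the proof of Theorem~A to this birational setting. On $U$, the current $\psi^\varstar T^+_g$ is positive closed, supported in $\psi^{-1}(\partial K^+_g)\subset\partial K^+_f$, and satisfies $f^\varstar(\psi^\varstar T^+_g)=d\,\psi^\varstar T^+_g$. Since the indeterminacy and exceptional loci of $\psi$ form a proper algebraic subset of $\C^2$, the stable manifold of any saddle periodic point of $f$ still meets $U$ in a Zariski-dense open subset and remains non-analytic; combined with the uniqueness of the Green current supported on $K^+_f$, the Riemann-surface foliation argument of the earlier lemma then yields $\psi^\varstar T^+_g=c_+T^+_f$ and hence $G^+_g\circ\psi=c_+G^+_f$ on $U$ for some $c_+>0$. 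Running the same argument with $f^{-1},g^{-1}$ produces $c_->0$ with $G^-_g\circ\psi=c_-G^-_f$ on $U$, whence
\[ G_g\circ\psi=\max(c_+G^+_f,c_-G^-_f)\qquad\text{on } U. \]

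The right-hand side is continuous and finite on all of $\C^2$, while $G_g(p)=\log^+\norm{p}+O(1)$ diverges to $+\infty$ as $p\to L_\infty$. If $q\in\C^2$ were an indeterminacy point of $\psi$, the cluster set of $\psi$ at $q$ would be a positive-dimensional algebraic subset of $\P^2$, necessarily meeting $L_\infty$; choosing a sequence $x_n\in U$ with $x_n\to q$ and $\psi(x_n)\to L_\infty$ would yield the contradiction $+\infty=\max(c_+G^+_f(q),c_-G^-_f(q))<+\infty$. The same argument forbids $\psi$ from sending any $q\in\C^2$ to $L_\infty$, so $\psi$ extends to a polynomial morphism $\C^2\to\C^2$. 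Applying this to $\psi^{-1}$, which conjugates $g$ to $f$, shows $\psi^{-1}$ is also a polynomial morphism $\C^2\to\C^2$; the equality $\psi\circ\psi^{-1}=\id$ holds on a Zariski-dense open set, hence everywhere, and $\psi\in\Aut(\A^2_\C)$. The main obstacle is the first step, i.e.\ transferring the pluriharmonic-function argument of Theorem~A from a global biholomorphism to the open set $U$; but this is unproblematic because the proper algebraic subset $\C^2\setminus U$ cannot obstruct either the uniqueness of the Green current or the density/non-analyticity of the stable manifolds used in the foliation argument.
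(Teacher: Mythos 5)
Your proposal takes a genuinely different, potential-theoretic route from the paper, and it contains a real gap at its first and most important step. You assert that $\psi^*T^+_g$ is ``supported in $\psi^{-1}(\partial K^+_g)\subset\partial K^+_f$'' and, equivalently, that $G^+_g\circ\psi$ vanishes on $K^+_f\cap U$, and you then declare the transfer from Theorem~A ``unproblematic.'' But the inclusion $\psi^{-1}(K^+_g)\subset K^+_f$ is exactly what fails to be automatic for a birational conjugacy: in Theorem~A it comes from $\|g^n(\varphi(p))\|=\|\varphi(f^n(p))\|\to\infty$ for $p\notin K^+_f$, using that $\varphi$ is a globally defined proper homeomorphism. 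For a birational $\psi$, the relation $\psi(f^n(p))=g^n(\psi(p))$ only holds when the orbit avoids $\mathrm{Ind}(\psi)$, and, more seriously, for $p\notin K^+_f$ the points $f^n(p)$ converge to $I^-_f$ on the line at infinity, where the behaviour of $\psi$ (does it send $I^-_f$ to a finite point? to $L_\infty$? is it even defined there?) is precisely what is unknown a priori. So the support claim begs the question; without it neither the Fornaess--Sibony uniqueness theorem nor the pluriharmonic/foliation argument (which needs $H=c\,G^+_f-G^+_g\circ\psi$ to vanish on $K^+_f$) can be launched. A second, more minor, unaddressed point is the extension of the psh function $G^+_g\circ\psi$ from $U$ to all of $\C^2$ across the indeterminacy and exceptional loci, which is needed before one can speak of a closed positive current on $\C^2$. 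Your endgame (cluster sets of indeterminacy points are curves, hence meet $L_\infty$, contradicting finiteness of $\max(c_+G^+_f,c_-G^-_f)$) would be fine \emph{if} the Green-function identity on $U$ were established, but as written the proof is circular at its core. One could repair it, e.g.\ by replacing the support argument with the invariance $f^*(\psi^*T^+_g)=d\,\psi^*T^+_g$ together with the classification of invariant currents with logarithmic potential growth, but that is a different argument from the one you give.

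For comparison, the paper's proof is much shorter and entirely algebro-geometric, with no potential theory: first, any irreducible curve of $\A^2_\C$ contracted by $\psi$ generates, via $\psi\circ f=g\circ\psi$, an $f$-invariant algebraic curve, which cannot exist for a Hénon map; hence $\psi$ and $\psi^{-1}$ contract no affine curve. Second, a minimal resolution $X\to\P^2$ of the indeterminacies shows that any indeterminacy point $q\in\A^2(\C)$ would force some irreducible plane curve through $q$ to be contracted by $\psi$, contradicting the first step. You may want to compare the two strategies: the analytic route, once the gap above is filled, gives dynamical information about how $\psi$ interacts with the Green functions, but for the statement at hand the algebraic argument is both simpler and complete.
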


\begin{proof} Let us consider $f$, $g$, and $\psi$ as birational transformations of the projective plane $\P^2_\C$.
  Let $E$ be the union of the irreducible curves $E_i\subset  \A^2_\C$ which are contracted by $\psi$
(there strict transform is a point of $\P^2_\C$). From  $\psi\circ f=g\circ \psi$ we deduce that $E$ is $f$-invariant, and since a Hénon map does not preserve any algebraic curve, we deduve that $E$ is empty. Similarly, $\psi^{-1}$ does not contract any curve $F\subset \A^2_\C$. 

It remains to show that $\psi$ does not have any indeterminacy point in $\A^2(\C)$. To see this, we resolve the indeterminacies of $\psi$ by a finite sequence of blow-ups: this provides a smooth projective surface $X$ together with two birational morphisms $\e, \, \eta \colon X\to \P^2_\C$ such that $\psi=\eta\circ \e^{-1}$; we assume that $X$ is minimal for this property.
Suppose that $q\in \A^2(\C)$ is an indeterminacy point of $\psi$ and denote by $D\subset X$ the tree of rational curves which is mapped to $q$ by $\e$. The curve $D$ must intersect an irreducible curve $C\subset X$ which is contracted by $\eta$ and is not contained in $D$; indeed, otherwise, $X$ would not be a minimal resolution of the indeterminacies. But then, $C$ is the strict transform by $\e$ of an irreducible plane curve $C'$ that contains $q$. This curve $C'$ is contracted by $\psi$ and contains $q$, in contradiction with $E=\emptyset$.
\end{proof}
    
 \begin{eg} 
 Fix an integer $m\geq 2$, and a positive integer $D$  which is not the $m$-th power of an integer. 
 One easily sees that the Hénon maps $f(x,y)=(y,x+y^{m+1})$ and $g(x,y)=(y,x+D y^{m+1})$ are conjugate over $\Q(D^{1/m})$ but not over $\Q$. Indeed, $h(x,y)=(\alpha x,\alpha y)$ conjugates $f$ to $g$ if $\alpha^{m}=1/D$, and from~\cite{Lamy:Tits} we know that the centralizer of $f$ is the semi-direct product of $f^\Z$ and the cyclic group of order $m$ generated by $(x,y)\mapsto (e^{2 i \pi/m} x, e^{2i\pi/m} y)$.
 \end{eg}

\section{Discussion and complements}

\newcounter{compteur}
\renewcommand{\thecompteur}{\alph{compteur}}
\newenvironment*{listeitem}
  {\refstepcounter{compteur} 
   \par
     {(\thecompteur) --} }
  {\medskip}

\begin{listeitem}
Theorem~A is still true when $\varphi$ is a proper holomorphic semiconjugacy:

\begin{thmB}
If $f$ and $g$ are loxodromic automorphisms of $\C^2$ and 
$\varphi:\C^2\to \C^2$ is a proper holomorphic map such that $\varphi \circ f = g\circ \varphi$, then 
$\varphi$ is a polynomial  automorphism. 
\end{thmB}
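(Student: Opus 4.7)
The approach is to carry out the current-theoretic argument from the proof of Theorem~A essentially verbatim, conclude as before that $\varphi$ is polynomial, and then add a short new step to upgrade this to a biholomorphism.

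First I would adapt the current identity to the semiconjugacy setting. Properness of $\varphi$ gives $\varphi^{-1}(K^\pm_g) = K^\pm_f$, and Remmert's proper mapping theorem together with a dimension count (the image of a proper holomorphic map $\C^2\to\C^2$ cannot be lower-dimensional, since its generic fibers would then be positive-dimensional and compact, which is impossible in $\C^2$) shows that $\varphi$ is surjective. The pullback $\varphi^\varstar T^+_g$ is thus a well-defined closed positive $(1,1)$-current supported on $K^+_f$, and it is nonzero because $\varphi(\C^2)=\C^2\not\subset K^+_g$. By the uniqueness of positive closed currents supported on $K^+_f$, $\varphi^\varstar T^+_g = cT^+_f$ for some $c>0$, and the pluriharmonic foliation argument (using density of stable manifolds of saddle periodic points in $\partial K^+_f$) upgrades this to the pointwise identity $G^+_g\circ\varphi = cG^+_f$. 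The symmetric argument gives $G^-_g\circ\varphi = c'G^-_f$ for some $c'>0$, and the growth estimate $G_f = \log^+\norm{\cdot} + O(1)$ then yields $\norm{\varphi(p)}\leq C\norm{p}^{\max(c,c')}$, so $\varphi$ is polynomial.

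To upgrade polynomiality to being an automorphism, I would look at the polynomial $J(p):=\det d\varphi_p$. Differentiating $\varphi\circ f = g\circ\varphi$ and using that the Jacobians of $f$ and $g$ are nonzero constants, one obtains
$$J(f(p)) = \frac{\jac g}{\jac f}\, J(p),$$
so the zero set $\{J=0\}$ is $f$-invariant. Since the zero set of a nonzero polynomial in $\C^2$ is either empty or an algebraic curve, and since $f$ is a generalized Hénon map and hence preserves no algebraic curve, we must have $\{J=0\}=\emptyset$; the alternative $J\equiv 0$ is excluded because it would force $\varphi$ to have rank less than $2$ everywhere, contradicting surjectivity. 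Therefore $J$ is a nonzero constant and $\varphi$ is a local biholomorphism at every point. A proper local biholomorphism $\C^2\to\C^2$ is a finite covering map, and since $\C^2$ is simply connected this covering has degree one, so $\varphi$ is a biholomorphism. Being polynomial, $\varphi\in\Aut(\A^2_\C)$.

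The main obstacle is in the first step: as $\varphi$ is only a semiconjugacy, one must use properness carefully both to identify $\varphi^{-1}(K^+_g)=K^+_f$ and to exclude the degenerate possibility $\varphi(\C^2)\subset K^+_g$ (which would give $c=0$). Once the identities $\varphi^\varstar T^\pm_g = c_\pm T^\pm_f$ with $c_\pm>0$ are secured, the argument of Theorem~A transfers without essential change, and the Jacobian argument above handles the upgrade to an automorphism.
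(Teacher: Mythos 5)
Your proposal is correct and follows essentially the same route as the paper's own proof: Remmert plus properness to get surjectivity and $\varphi\inv(K^\pm_g)=K^\pm_f$, the rigidity of currents supported on $K^\pm_f$ to run the Theorem~A argument and conclude polynomiality, and then non-vanishing of $\jac(\varphi)$ via the absence of $f$-invariant algebraic curves to upgrade the proper polynomial map to an automorphism. Your added details (the dimension count for surjectivity and the simple-connectedness argument showing the covering has degree one) merely flesh out steps the paper leaves implicit.
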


\begin{proof} Note that we also have $\varphi \circ f\inv = g\inv\circ \varphi$. The properness of $\varphi$ implies successively that 
\begin{enumerate}[(1)]
\item $\varphi(\C^2)=\C^2$ by Remmert's proper mapping theorem;
\item  $\varphi\inv(K_g^\pm) = K_f^\pm$; 
\item $\varphi^\varstar (T_g^+)$ (resp. $\varphi^\varstar (T_g^-)$)  
is a positive closed current supported on $K_f^+$ (resp. $K_f^-$).
\end{enumerate}
Indeed, $\varphi^\varstar (T_g^+)$ (resp. $\varphi^\varstar (T_g^-)$)  is non-zero by the first item.  Hence 
$\varphi^\varstar T_g^\pm = c^\pm  T_f^\pm$, with $c^\pm >0$, and exactly as in Theorem A we conclude that $\varphi$ is polynomial. Finally, the Jacobian of $\varphi$ does not vanish, 
because otherwise, by the relation $\varphi \circ f = g\circ \varphi$,
$\set{\mathrm{Jac}(\varphi) = 0}$ would be an $f$-invariant algebraic curve, and such a curve does 
not exist by~\cite[Proposition 4.2]{bs1}. Thus, 
since $\varphi$ is proper, it is a covering, hence an automorphism, and we are done. 
\end{proof}
\end{listeitem}

\begin{listeitem}
Take $\bfK=\R$. If $f\in \Aut(\A^2_\R)$  has no fixed point in $\R^2$ and $f$ preserves the orientation of $\R^2$, then $f$ is   conjugate to a translation 
by some real analytic diffeomorphism of $\R^2$ (see~\cite{cantat-lamy}). Thus, \emph{Theorem~A
fails 
for polynomial automorphisms of $\R^2$.} On the other hand one may expect some rigidity properties 
when $h_{\mathrm{top}}(f\rest{\R^2})>0$, for instance when $K_f\subset \R^2$. 
\end{listeitem}

\begin{listeitem}  If $U$ is a small ball in the complement of $K^+_f\cup K^-_f$, then its orbit under the action of $f$ is wandering. So, we can find a $C^\infty$ diffeomorphism of $\C^2$ that commutes to $f$, is the identity on the complement of the $f$-orbit of $U$, and is not the identity on $U$. This shows that, as it is stated,
\emph{Theorem~A fails in the $C^\infty$ setting.} 

On the other hand, the following lemma shows that the canonical currents are preserved under $C^1$ conjugacy. Thus, the rigidity of positive closed currents with support in $K^+$ is not really needed in the proof of Theorem~A, we could replace it by this lemma. 

\begin{lem}
Let $f$ and $g$ be Hénon automorphisms of the plane $\A^2_\C$.
If $\varphi$ is a $C^1$ diffeomorphism of $\C^2$ conjugating $f$ to $g$, then $\varphi^\varstar T^+_g = cT^+_f$ for some real number $c\neq 0$.
\end{lem}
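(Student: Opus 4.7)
The plan is to show that $\varphi^{\varstar}T^+_g$ is a positive closed $(1,1)$-current supported in $K^+_f$; the Fornaess--Sibony uniqueness theorem recalled above will then force $\varphi^{\varstar}T^+_g=cT^+_f$ with $c>0$. The difficulty compared with Theorem~A is that, since $\varphi$ is only $C^1$, the pullback is a priori just a closed real $2$-current, with no positivity or bidegree. The way around this is to combine the laminar structure of $T^+_g$ with the fact that $\varphi$, being in particular a topological conjugacy, sends $f$-stable manifolds to $g$-stable manifolds.

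First, I would note that $\varphi(W^s_f(p))=W^s_g(\varphi(p))$ for every $p\in\C^2$: this is immediate from the topological characterization $W^s(p)=\{q:d(f^n(q),f^n(p))\to 0\}$ together with the continuity of $\varphi$ and $\varphi^{-1}$. In particular, the $\varphi$-preimage of any piece of a $g$-stable manifold is an open piece of an $f$-stable manifold, hence a holomorphic disk in $\C^2$.

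Next, I would invoke the Bedford--Lyubich--Smillie laminar structure of $T^+_g$: on any flow-box $B$ of the $g$-stable lamination,
$$T^+_g\bigr|_B = \int [D_t]\,d\nu^g(t),$$
with $D_t$ the stable plaques and $\nu^g$ a positive transverse measure. Testing against smooth $2$-forms and applying Fubini shows that $\varphi^{\varstar}$ commutes with this integral, so
$$\varphi^{\varstar}T^+_g\bigr|_{\varphi^{-1}(B)} = \int [\varphi^{-1}(D_t)]\,d\nu^g(t).$$
By the previous step each $\varphi^{-1}(D_t)$ is a holomorphic disk, so $[\varphi^{-1}(D_t)]$ is a positive closed $(1,1)$-current of integration, and so is the integral. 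Patching via a partition of unity over a cover of $\partial K^+_g$ by flow-boxes then gives that $\varphi^{\varstar}T^+_g$ is a positive closed $(1,1)$-current globally on $\C^2$, with support in $\varphi^{-1}(\partial K^+_g)=\partial K^+_f\subset K^+_f$. The uniqueness theorem yields $\varphi^{\varstar}T^+_g=cT^+_f$ with $c\geq 0$, and $c>0$ since $\varphi^{\varstar}$ is a bijection on currents. The main technical point in this strategy is the globally coherent use of the laminar decomposition, which is standard from the Bedford--Lyubich--Smillie theory and the subsequent work of Dujardin on laminar currents for Hénon maps.
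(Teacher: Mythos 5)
There is a genuine gap, and it sits exactly at the step where you declare positivity. From the identity $\varphi^{\varstar}T^+_g|_{\varphi^{-1}(B)}=\int \varphi^{\varstar}[D_t]\,d\nu^g(t)$ you pass to $\int[\varphi^{-1}(D_t)]\,d\nu^g(t)$ as if $\varphi^{\varstar}[D_t]=[\varphi^{-1}(D_t)]$. But the pullback of an integration current under a $C^1$ diffeomorphism is integration over the preimage \emph{with the orientation transported by $\varphi$}: even though $\varphi^{-1}(D_t)$ is indeed a holomorphic disk (your stable-manifold argument for that is fine), the restriction $\varphi|_{\varphi^{-1}(D_t)}$ may reverse the complex orientation, in which case $\varphi^{\varstar}[D_t]=-[\varphi^{-1}(D_t)]$. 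This is not a removable technicality: take $g=\bar f$ (conjugate coefficients) and $\varphi(x,y)=(\bar x,\bar y)$; then $\varphi$ conjugates $f$ to $g$ and $\varphi^{\varstar}T^+_g=-T^+_f$. So your conclusion $c>0$ is false in general --- which is why the lemma is stated with $c\neq 0$ rather than $c>0$ --- and the Fornaess--Sibony uniqueness theorem for \emph{positive} closed currents supported on $K^+_f$ cannot be invoked as you do. What you actually obtain is a closed laminar current subordinate to the stable plaques with a \emph{signed} transverse measure, and the missing step is to show that the sign is almost everywhere constant. The paper gets this from the ergodicity of the transverse measure of $T^+_f$ (itself a consequence of the extremality of $T^+_f$ in the cone of positive closed currents): the pullback is absolutely continuous with respect to $T^+_f$ in the transverse direction, and ergodicity forces the density, sign included, to be constant. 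Once constancy of the sign is known, one can of course apply the uniqueness theorem to $\pm\varphi^{\varstar}T^+_g$.

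Two secondary imprecisions. First, the stable ``lamination'' of a general Hénon map is not a genuine lamination admitting flow boxes covering $\supp T^+_g$: the leaves are Pesin stable manifolds and the correct framework is that of weak (measurable) laminations and strongly approximable laminar currents, as in the structure theory cited in the paper; your partition-of-unity patching should be phrased in that setting. Second, in identifying $\varphi^{-1}(D_t)$ with a piece of an $f$-stable manifold you should use that $\varphi$ is locally bi-Lipschitz (hence preserves exponential contraction rates), not merely a homeomorphism; this is also where the $C^1$ hypothesis enters, together with the fact that pulling back currents requires $C^1$ regularity in the first place.
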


\begin{proof}[Sketch of proof] For  both $f$ and $g$, 
$T^+$ is a laminar current whose laminar structure 
is entirely defined in terms of the smooth dynamics~\cite{bls}. 
More precisely,   the structure theory of 
strongly laminar currents~\cite[\S 5]{structure} shows that 
 these currents can be viewed in a precise way 
as foliation cycles on a weak lamination, 
 whose leaves are Pesin stable manifolds  and whose transverse measure is induced
 by the unstable conditionals of the unique measure of maximal entropy. It follows that if  $\varphi$ is a $C^1$ diffeomorphism conjugating $f$ and $g$, 
 $\varphi^\varstar T^+_g$ is a closed laminar current subordinated to the same weak lamination as $T^+_f$; more precisely,  $\varphi^\varstar T^+_g$  is
 \begin{itemize}
 \item a closed current;
 \item a laminar current with respect a transverse signed measures (the sign takes care of the orientation of the local leaves, since $\varphi$ may reverse this orientation); 
 \item absolutely continuous with respect to $T_f^+$; in particular it is supported by $K^+_f$.
 \end{itemize}
On the other hand, the  fact that 
 $T_f^+$ is an extremal point in the cone of positive 
 closed currents (see e.g.~\cite[\S 2.2]{Sibony:Panorama})  
  entails that  the transverse invariant measure of $T^+_f$ is ergodic (see~\cite[Cor. 5.8]{structure})
  and this implies that 
$\varphi^\varstar T^+_g$ is a constant multiple of $T^+_f$.  
\end{proof}
\end{listeitem}

 \bibliographystyle{plain}
\bibliography{bib-conjugate}
\end{document}